\begin{document}
\title{A construction of projective  bases  for irreducible representations of multiplicative groups of division algebras over local fields}

\maketitle

\newcommand{\set}[1]{\left\{#1\right\}}

\newcommand{\un}{{\underline}}

\newcommand{\mR}{{\mathbb R}}

\newcommand{\mC}{{\mathbb C}}

\newcommand{\mZ}{{\mathbb Z}}

\newcommand{\mP}{\mathbb P}

\newcommand{\mG}{\mathbb G}

\newcommand{\mA}{\mathbb A}

\newcommand{\mF}{\mathbb F}

\newcommand{\mQ}{\mathbb Q}

\newcommand{\ho}{\hookrightarrow}

\newcommand{\Gg}{\gamma}

\newcommand{\GG}{\Gamma}

\newcommand{\bO}{\Omega}

\newcommand{\bo}{\omega}

\newcommand{\bl}{\lambda}

\newcommand{\bL}{\Lambda}

\newcommand{\bs}{\sigma}

\newcommand{\bS}{\Sigma}

\newcommand{\ep}{\epsilon}

\newcommand{\D}{\Delta}

\newcommand{\mcA}{\mathcal A}

\newcommand{\mcB}{\mathcal B}

\newcommand{\mcC}{\mathcal C}

\newcommand{\mcD}{\mathcal D}

\newcommand{\mcE}{\mathcal E}

\newcommand{\mcF}{\mathcal F}

\newcommand{\mcG}{\mathcal G}

\newcommand{\mcH}{\mathcal H}

\newcommand{\mcI}{\mathcal I}

\newcommand{\mcJ}{\mathcal J}

\newcommand{\mcK}{\mathcal K}

\newcommand{\mcL}{\mathcal L}

\newcommand{\mcM}{\mathcal M}

\newcommand{\mcN}{\mathcal N}

\newcommand{\mcO}{\mathcal O}

\newcommand{\mcP}{\mathcal P}

\newcommand{\mcQ}{\mathcal Q}

\newcommand{\mcR}{\mathcal R}

\newcommand{\mcS}{\mathcal S}

\newcommand{\mcT}{\mathcal T}

\newcommand{\mcU}{\mathcal U}

\newcommand{\mcV}{\mathcal V}

\newcommand{\mcW}{\mathcal W}

\newcommand{\mcX}{\mathcal X}

\newcommand{\mcY}{\mathcal Y}

\newcommand{\mcZ}{\mathcal Z}

\newcommand{\fa}{\mathfrak a}

\newcommand{\fb}{\mathfrak b}

\newcommand{\fc}{\mathfrak c}

\newcommand{\fd}{\mathfrak d}

\newcommand{\fe}{\mathfrak e}

\newcommand{\ff}{\mathfrak f}

\newcommand{\fg}{\mathfrak g}

\newcommand{\fh}{\mathfrak h}

\newcommand{\fii}{\mathfrak i}

\newcommand{\fj}{\mathfrak j}

\newcommand{\fk}{\mathfrak k}

\newcommand{\fl}{\mathfrak l}

\newcommand{\fm}{\mathfrak m}

\newcommand{\fn}{\mathfrak n}

\newcommand{\fo}{\mathfrak o}

\newcommand{\fp}{\mathfrak p}

\newcommand{\fq}{\mathfrak q}

\newcommand{\fr}{\mathfrak r}

\newcommand{\fs}{\mathfrak s}

\newcommand{\ft}{\mathfrak t}

\newcommand{\fu}{\mathfrak u}

\newcommand{\fv}{\mathfrak v}

\newcommand{\fw}{\mathfrak w}

\newcommand{\fx}{\mathfrak x}

\newcommand{\fy}{\mathfrak y}

\newcommand{\fz}{\mathfrak z}
\newcommand{\kk}{\kappa}

\newcommand{\ti}{\tilde}
\newcommand{\und}{\underline}

\newcommand{\va}{\vartriangleleft}
\newcommand{\bas}{\backslash}

\newtheorem{theorem}{Theorem}[section]

\newtheorem{proposition}[theorem]{Proposition}

\newtheorem{conjecture}[theorem]{Conjecture}

\newtheorem{corollary}[theorem]{Corollary}

\newtheorem{lemma}[theorem]{Lemma}

\newtheorem{claim}[theorem]{Claim}

\newtheorem{maintheorem}[theorem]{Main Theorem}

\newtheorem{question}[theorem]{Question}

\theoremstyle{definition}

\newtheorem{remark}[theorem]{Remark}

\newtheorem{example}[theorem]{Example}

\newtheorem{definition}[theorem]{Definition}

\newtheorem{algorithm}[theorem]{Algorithm}

\newtheorem{problem}[theorem]{Problem}

\newtheorem{pf}[theorem]{Proof}

\author{ David Kazhdan}

\begin{abstract}
  Let $F_0$ be a local non-archimedian field of positive characteristic, $D_0$ be a skew-field with  center $F_0$ and $ G_0:=D_0^{\star}$ be the multiplicative group of
$D_0$.
The goal of this paper is to provide a canonical decomposition of 
any complex irreducible representation $V$ of $G_0$ in a direct sum of one-dimensional subspaces.

\end{abstract}

Let $k=\mF _q$ be a finite field, $F:=k(t)$ the field of rational functions on  the projective line $\underline \mP ^1$ over $k$. 
Let $S$ be the set of points of $\underline \mP ^1$.  For any point $s\in S$ we denote by $F_s$ the completion of $F$ at $s$, by $\nu _s:F_s\to \mZ \cup +\infty$ the valuation map 
 and by $\mcO _s \subset F_s$ the subring of integers. We denote by $\mA$ the ring of adeles for $F$.

Let $D$ be a  skew-field  with  center $F$
unramified outside $\{ 0,{\infty}\},D_0:=D\otimes _FF_0$ and $D_{\infty}:=D\otimes _FF_{\infty}$. We have $dim_F(D)=n^2$.

We denote  by $\und G$ the mutiplicative group of $D$ considered as an algebraic $F$-group, write $G_s:=\und G(F_s)$ and denote by $\mC _cG_0)\subset \mC (G_0)$ the algebra of locally constant compactly supported functions on $G_0$.

For any point $s\in S, s\neq 0,\infty$ we identify the group the group 
$\und G(F_s)$ with $Gl_n(F_s)$ and define $K_s:=Gl_n(\mcO _s)$.

Let $N_\infty :G_\infty \to F_\infty ^\star$ be the reduced norm. We define $K \infty =\{ g\in D_\infty | \nu _\infty (N_\infty (g))\geq 0\} $
 and $K^1_\infty =\{ g\in D_\infty | \nu _\infty (N_\infty (g-1))>0\}$.
Then $K_\infty \subset G_\infty$ is an open compact subgroup and $K_\infty /K^1_\infty =\mF _{q^n}^\star$. We define 
$K^1:=\prod _{s\in S -\{0,\infty \}}K_s\times K^1_\infty$.

The multiplication defines a map 
$$\kk :G_0\times K^1\times \und G(F)\to \und G(\mA)$$
This paper is based on the following result
\begin{proposition}The map $\kk$ is a bijection.
\end{proposition}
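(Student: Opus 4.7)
The plan is to establish the two parts of the bijection $\kk$ by rather different techniques.

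\emph{Injectivity.} Suppose $\kk(g_0,k,\gamma)=\kk(g_0',k',\gamma')$; comparing components at each place shows that $\delta:=\gamma'\gamma^{-1}\in\und G(F)$, viewed adelically, lies in $\prod_{s\neq 0,\infty}K_s\times K^1_\infty$. So injectivity reduces to the assertion that this intersection with $\und G(F)$ is trivial. Introduce the larger group $\mcB^\times:=\und G(F)\cap\prod_{s\neq 0}K_s$, i.e.\ the units of a maximal $k[t^{-1}]$-order of $D$. For $\delta\in\mcB^\times$, its reduced norm $N(\delta)\in k(t)^\times$ is a unit at every $s\neq 0$ and therefore a constant in $k^\times$; in particular $\nu_0(N(\delta))=0$, forcing $\delta\in\mcO_{D_0}^\times$. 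Thus $\mcB^\times$ is discrete in the compact group $\mcO_{D_0}^\times\times\prod_{s\neq 0}K_s$, and hence finite. Any $\delta\in\mcB^\times\cap K^1_\infty$ then has finite order; but $K^1_\infty=1+\mathfrak P_\infty$ is a pro-$p$ group (with $p=\mathrm{char}\,k$), because each successive subquotient $\mathfrak P_\infty^j/\mathfrak P_\infty^{j+1}\cong\mF_{q^n}$ is an elementary abelian $p$-group, so $\delta^{p^r}=1$ for some $r\ge 0$. In characteristic $p$ this gives $(\delta-1)^{p^r}=\delta^{p^r}-1=0$, and since $D$ is a division algebra (no nonzero nilpotents) $\delta=1$.

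\emph{Surjectivity.} Given $g\in\und G(\mA)$ I seek $\gamma\in\und G(F)$ with $g\gamma^{-1}\in G_0\times K^1$, in two steps. First (Step~A) I find $\gamma_1\in\und G(F)$ with $g\gamma_1^{-1}\in G_0\times K$, where $K:=\prod_{s\neq 0,\infty}K_s\times K_\infty$. Equivalently, the class number of a maximal $k[t^{-1}]$-order of $D$ is one; this follows from Eichler's theorem applied to $D$ over the Dedekind ring $k[t^{-1}]$, since $\mathrm{Pic}(k[t^{-1}])=0$ and the Eichler condition holds ($D$ splits at every closed point of $\mathrm{Spec}\,k[t^{-1}]$ other than the one over $\infty\in\mP^1$). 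Second (Step~B) I adjust at $\infty$: the constant-field extension $L=\mF_{q^n}(t)$ embeds into $D$ as a maximal subfield---the unique place of $L$ over each of $0,\infty$ has local degree $n$, which absorbs the local index of $D$---so the finite-field constants $\mF_{q^n}^\times\subset L^\times\subset\und G(F)$ map isomorphically onto $K_\infty/K^1_\infty=\mF_{q^n}^\times$. Letting $\bar\alpha\in\mF_{q^n}^\times$ lift the class of the $\infty$-component of $g\gamma_1^{-1}$ and setting $\gamma=\bar\alpha\gamma_1$, one verifies $g\gamma^{-1}\in G_0\times K^1$.

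The principal obstacle is Step~A of the surjectivity---the Eichler class-number-one input, which is the step where the global arithmetic of $D$ over $\mP^1$ really enters. The injectivity, by contrast, is a short argument combining the finiteness of the global unit group with the incompatibility between the pro-$p$ structure of $K^1_\infty$ and the non-existence of nonzero nilpotents in the division algebra $D$.
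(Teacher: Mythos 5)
Your injectivity argument is correct, and it is in effect a complete proof of the step the paper dismisses as obvious: you reduce to $(G_0\times K^1)\cap \und G(F)=\{e\}$, observe that such an element is a unit of a maximal $k[t^{-1}]$-order, that its reduced norm is a constant so the unit group is finite (discrete inside a compact group), and that a finite-order element of the pro-$p$ group $K^1_\infty$ must equal $1$ since $\delta^{p^r}=1$ gives $(\delta-1)^{p^r}=0$ and a division algebra has no nonzero nilpotents. Your Step B at $\infty$ is also fine, granted (as the paper itself implicitly requires) that the identifications $\und G(F_s)\cong Gl_n(F_s)$ are made coherently through a global maximal order containing the constants $\mF_{q^n}$, so that $\mF_{q^n}^\star$ lands in every $K_s$; at $\infty$ itself there is no issue because $D_\infty$ is a skew-field and therefore has a unique maximal order.

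The genuine gap is Step A, which is where the whole arithmetic content of the Proposition sits. Eichler's theorem does not apply here: the Eichler condition is a condition on the places of $F$ at infinity relative to the Dedekind ring, i.e.\ on the places not in $\mathrm{Spec}\,k[t^{-1}]$ --- in this situation the single place $0$ --- and it demands that $D$ be split at at least one of them. Your parenthetical justification instead records that $D$ splits at the finite places away from $\infty$, which is not the condition; at the one relevant place, $D_0$ is a skew-field by the standing hypothesis of the paper, so the Eichler condition fails. This is precisely the ``totally definite'' situation in which Eichler's class-number formula is unavailable and in which maximal orders generally have class number larger than one (compare definite quaternion orders over $\mZ$, where class number one occurs only for finitely many discriminants). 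The class-number-one statement you need is in fact true, but it is a special feature of $D$ being ramified at exactly the two $k$-rational points $0$ and $\infty$ with full local index $n$; it requires a genuine argument --- a mass-formula computation, or the decomposition result the paper invokes, namely Lemma 7.4 of \cite{HK}. As written, your surjectivity argument assumes the crucial global fact rather than proving it.
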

\begin{proof}
The surjectivity follows from Lemma 7.4 in \cite{HK}. To show the injectivity
it is sufficient to check the equality

$$(D_0^{*}\times K^1)\cap G_E=\{ e\}$$

 which is obvious.

\end{proof}
Let $R$ be the space of $\mC$-valued locally constant functions on 
$\und G(\mA )/\und G(F) K^1$,
 $\mcH_s, s\neq 0,\infty $ be the spheical Hecke algebra at $s, \mcH : =\prod _{s\in S -\{0,\infty \}} \mcH _s$.  

We have a natural action $a\to \hat a$ of the commutative algebra $\mcA :=\mcH \otimes \mC [K^0_\infty /K^1_\infty ]$ on $R$.

\begin{corollary} 
\begin{enumerate}
\item The natural action of the group $G_0$ on the space $X:=\und G(\mA)/ K^1\times \und G(F)$ is simply transitive. So we can identify $X$ with $G_0$.
\item The restriction to $G_0$ defines a $G_0$-equivariant isomorphism 
$u:R\to \mC (G_0)$.
\item For  any irreducible representation $V$ of $G_0$  the restriction to $G_0$ defines an isomorphism $u_V: Hom _{G_0}(V^\vee ,L) \to V $
where $V^\vee$ is the representation dual to $V$.
\item There exists a map $\alpha :\mcA \to \mC _c(G_0)$ such that 
$$f\star \alpha (a))=\hat a(f), a\in \mcA ,f\in \mC _c(G_0)$$
\end{enumerate}
\end{corollary}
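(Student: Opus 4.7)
The strategy is to use the Proposition to identify $R$ with $\mC(G_0)$ (equipped with the left regular representation), after which (1)--(3) become essentially formal and (4) reduces to a convolution-kernel computation.

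For (1), factor every $x \in \und G(\mA)$ uniquely as $x = g_0(x)\,k(x)\,\gamma(x)$ via $\kk$. The assignment $x \mapsto g_0(x)$ is invariant under left multiplication by $K^1$ and right multiplication by $\und G(F)$: if $x = g\beta\delta$, then $kx = g\,(g^{-1}kg\cdot\beta)\,\delta$ and $g^{-1}kg\cdot\beta \in K^1$ since $g \in G_0$ is supported at place $0$ while $k,\beta \in K^1$ are supported away from $0$; similarly $xh = g\beta(\delta h)$ with $\delta h \in \und G(F)$. Hence $g_0$ descends to a bijection $X \to G_0$, and the left $G_0$-action on $X$ (which commutes with the $K^1$- and $\und G(F)$-actions, again because $G_0$ lives at place $0$) becomes left multiplication on $G_0$, which is simply transitive. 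Part (2) is then immediate: restriction of $f \in R$ to $G_0 \subset \und G(\mA)$ is a bijection onto $\mC(G_0)$ because $G_0$ is a set of coset representatives, with built-in equivariance. For (3), combine (2) with the standard matrix-coefficient isomorphism $\mathrm{Hom}_{G_0}(V^\vee,\mC(G_0)) \to V$, $\phi \mapsto (\xi \mapsto \phi(\xi)(e))$, whose inverse sends $v \in V$ to $\xi \mapsto (g \mapsto \xi(gv))$; since $G_0$ is compact modulo its centre, $V$ is finite-dimensional and reflexivity $(V^\vee)^\vee = V$ is automatic.

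For (4), note that $\mcH_s$ acts on $R$ by right convolution at place $s \neq 0$ and $\mC[K^0_\infty/K^1_\infty]$ at place $\infty$, while $G_0$ acts by left multiplication at place $0$; these actions commute. Transferring through $u$, each $a \in \mcA$ becomes an operator $T_a$ on $\mC(G_0)$ commuting with left translations, hence right convolution with some distribution $\alpha(a)$. A direct calculation using $u^{-1}(f)(x) = f(g_0(x))$ and the identity $g_0(gy) = g\cdot g_0(y)$ from (1) yields
\[
T_a(f)(g) = \int_{\und G(\mA)} f\bigl(g\cdot g_0(y)\bigr)\,a(y)\,dy,
\]
identifying $\alpha(a)$ as the pushforward of $a(y)\,dy$ along $g_0$.

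The main obstacle is verifying $\alpha(a) \in \mC_c(G_0)$. Local constancy is inherited from local constancy of $a$ together with continuity of $g_0$. Compact support of $\alpha(a)$ reduces to compactness of the image $g_0(\mathrm{supp}(a))$ in $G_0$, for which one needs: (i) that $\kk$ is a homeomorphism $G_0 \times K^1 \times \und G(F) \to \und G(\mA)$, giving continuity of $g_0$ --- this follows from the Proposition combined with the openness of the compact subgroup $K^1$ and the discreteness of $\und G(F)$ in $\und G(\mA)$; and (ii) that $\mathrm{supp}(a)$ is compact in the part of $\und G(\mA)$ away from place $0$, which holds because any $a \in \mcA$ decomposes as $a = \prod_s a_s$ with $a_s = 1_{K_s}$ for almost all $s$ and $a_\infty$ supported in $K^0_\infty$.
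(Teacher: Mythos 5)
The paper states this Corollary with no proof at all, treating it as immediate from the Proposition, and your argument fills in precisely that intended route: use the unique factorization $x=g_0(x)k(x)\gamma(x)$ coming from the bijectivity of $\kk$ to identify $X$ with $G_0$ (checking invariance under $K^1$ and $\und G(F)$ via the fact that $G_0$ and $K^1$ live at disjoint places), transport the $G_0$- and $\mcA$-module structures through restriction, reduce (3) to the standard matrix-coefficient isomorphism for the finite-dimensional irreducible $V$, and realize $\hat a$ as convolution with the pushforward of $a\,dy$ along $g_0$, whose local constancy and compact support follow from $G_0K^1$ being open, $\und G(F)$ discrete, and $\mathrm{supp}(a)$ compact. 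Your proposal is correct and essentially the paper's (implicit) argument; the only point to tidy is the left/right bookkeeping --- with your choice of invariances (left $K^1$, right $\und G(F)$) the Hecke algebra acts by left convolution, so $\alpha(a)$ is the pushforward of $a(y^{-1})\,dy$ --- a convention the paper itself leaves ambiguous.
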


Let $\Xi$ be the set of homomorphisms 
$\chi : \mcH \otimes \mC [\mF _{q^n}^\star] \to \mC$. For any
 $\chi \in \Xi$  we define $V_\chi :=\{ v\in V|av=\chi (a)v\}$ for all 
$a\in \mcH \otimes \mC [\mF _{q^n}^\star] $.
 Let $\Xi _V=\{ \chi \in \Xi |V_\chi \neq \{ 0\}\}$.
\begin{theorem}
\begin{enumerate}
\item $dim (V_\chi )=1$ for all $\chi \in \Xi _V$.
\item $V=\oplus _{\chi \in \Xi _V} V_\chi $.
\end{enumerate}
\end{theorem}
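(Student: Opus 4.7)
The plan is to transport the $\mcA$-action on $V$ to the global setting via the Corollary, decompose $R$ spectrally under $\und G(\mA)$, and read off the eigenspace decomposition of $V$ from the local structure of the contributing automorphic representations.

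By part (3) of the Corollary, $V$ is canonically identified with the multiplicity space of $V^\vee$ inside $R$, and by parts (2) and (4) the $\mcA$-action on $V$ (via $\alpha$ and convolution) coincides with the action of Hecke operators on this multiplicity space, which commutes with the full $\und G(\mA)$-action on $R$. Decomposing $R$ into irreducible automorphic representations -- each appearing with multiplicity one since $\und G = D^\star$ is an inner form of $GL_n$ with $D$ a division algebra (Jacquet--Langlands) -- yields
$$R \;\cong\; \bigoplus_\pi \pi^{K^1}, \qquad \pi^{K^1} \;\cong\; \pi_0 \otimes W_\pi,$$
where $W_\pi := \pi_\infty^{K^1_\infty} \otimes \bigotimes_{s \neq 0,\infty} \pi_s^{K_s}$ is the multiplicity space on which $\mcA$ acts. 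Gathering the summands with $\pi_0 \cong V^\vee$ gives $V \cong \bigoplus_{\pi : \pi_0 \cong V^\vee} W_\pi$ as $\mcA$-modules.

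The rest of the argument is local and spectral. At each $s \neq 0,\infty$, the factor $\pi_s^{K_s}$ is one-dimensional with $\mcH_s$ acting by the Satake character of $\pi_s$. At $\infty$, the classification of level-zero irreducible representations of $D_\infty^\star$ identifies $\pi_\infty^{K^1_\infty}$ with a single Frobenius orbit of characters of $K_\infty/K^1_\infty = \mF_{q^n}^\star$, each occurring with multiplicity one. Thus each $W_\pi$ is a direct sum of pairwise distinct one-dimensional $\mcA$-eigenspaces. That characters arising from different $\pi \neq \pi'$ are also distinct is strong multiplicity one for $\und G$: coincidence of the Satake data at every $s \neq 0,\infty$ combined with $\pi_0 \cong \pi_0'$ forces $\pi \cong \pi'$. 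Consequently every $\chi \in \Xi_V$ corresponds to a unique eigenline, giving $\dim V_\chi = 1$ and $V = \bigoplus_{\chi \in \Xi_V} V_\chi$.

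The main obstacle is the appeal to strong multiplicity one for inner forms of $GL_n$ over a function field; the remaining ingredients -- multiplicity one in the discrete spectrum and the local description of $\pi_\infty^{K^1_\infty}$ -- are comparatively concrete and essentially follow from the Jacquet--Langlands correspondence and the classification of level-zero types for $D_\infty^\star$.
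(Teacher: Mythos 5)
Your proposal is correct and follows essentially the same route as the paper: the paper also globalizes through the Corollary and cites \cite{DKV} and \cite{PS} for precisely the ingredients you invoke (the local correspondence, multiplicity one and strong multiplicity one), obtaining the decomposition of $V$ into irreducible $\mcH \times G_\infty$-modules indexed by distinct characters. Your description of $\pi_\infty^{K^1_\infty}$ as a single Frobenius orbit of characters of $\mF_{q^n}^\star$, each with multiplicity one, is exactly the paper's final step, phrased there as the fact that any irreducible representation of $G_\infty/K^1_\infty = \mZ \ltimes \mF_{q^n}^\star$ restricts to $\mF_{q^n}^\star$ as a direct sum of distinct one-dimensional characters.
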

\begin{proof}

As follows from \cite{DKV} and \cite{PS} we have direct sum decomposition
 $$V=\oplus _{\chi \in \Xi _V}V_\chi$$
where the subspaces $V_\chi \subset V$ are 
$\mcH \times G_\infty$-invariant and the representation $\ti \rho _\chi$ of $\mcH \times G_\infty$ on $V_\chi$ is irreducible. Since $\mcH $ is commutative this implies the irreduciblity of the restriction $\rho _\chi$ of 
$\ti \rho _\chi$ to $G_\infty $. By definition we can consider $\rho _\chi$ as a representation of the quotient group  $G_\infty /K^1_\infty =\mZ \ltimes \mF^\star  _{q^n}$ where $1\in \mZ$ acts by the Frobenious automorphism on $ \mF^\star  _{q^n}$. It is easy to see that the restriction of any irreducible representation of the group $\mZ \ltimes \mF^\star  _{q^n} $ on 
$ \mF^\star  _{q^n}$ is the direct sum of distinct one-dimensional representations.
\end{proof}

\begin{question} Is the subalgebra $\alpha (\mcA)\subset \mC _c(G_0)$ invariant under the natural action of the group of automorphisms of $F$?
\end{question}

\begin{remark} The paper \cite{K} was influnced by \cite{Katz} and is concerned with the understanding of the local Langlands conjecture. This short paper is a streamlined version of \cite{K}.

Acknowledgments. The project has received funding from ERC under grant agreement 669655. 
\end{remark}


\begin{thebibliography}{1}







\bibitem{DKV} Deligne, P.; Kazhdan, D.; Vign\'{}eras, M.-F.
Repr\'esentations des alge'bres centrales simples    $p$-adiques.
Representations of reductive groups over
a local field, 33--117, Travaux en Cours, Hermann, Paris, 1984"





\bibitem{HK}Hrushovski, E,; Kazhdan D.; Motivis Poisson summation. Moscow Math. J. 9(2009) no. 3 569-623

\bibitem{Katz}Katz, N.Local-to-global extensions of representations of fundamental groups. (French summary)
Ann. Inst. Fourier (Grenoble) 36 (1986), no. 4, 69–106.


 \bibitem{K} Kazhdan, David On a theorem of N. Katz and bases in irreducible representations. From Fourier analysis and number theory to Radon transforms and geometry, 335–340, Dev. Math., 28, Springer, New York, 2013



\bibitem{PS} Piatetskii-Shapiro I. Multiplicity one theorems, Proc. Sympos. Pure Math., vol. 33, Part I, Providence, R. I., 1979, pp. 209-212.

 \end{thebibliography}
\end{document}